\newtheorem {theorem} {Theorem}[section]
\newtheorem {proposition} [theorem]{Proposition}
\newtheorem {corollary} [theorem]{Corollary}
\newtheorem {lemma}  [theorem]{Lemma}
\newcommand{\R}{{\mathbb R}}
\newcommand{\Z}{{\mathbb Z}}
\def\b{\beta}
\def\d{\delta}
\def\la{\lambda}
\def\g{\gamma}
\title{\large\bf The Decay Rate of Patterson-Sullivan Measures with Potential Functions and Critical Exponents}
\author{
Ziqiang Feng \thanks{College of Mathematics and System Science, Shandong University of Science and Technology, Qingdao, 266590, P.R. China.
e-mail: feng\_math$@$163.com.}
\and
Fei Liu \thanks{College of Mathematics and System Science, Shandong University of Science and Technology, Qingdao, 266590, P.R. China.
e-mail: liufei$@$math.pku.edu.cn.}
\and Fang Wang
\thanks{School of Mathematical Sciences, Capital Normal University, Beijing, 100048, China; and Beijing Center for Mathematics and Information
Interdisciplinary Sciences (BCMIIS), Beijing 100048, P.R. China. e-mail: fangwang@cnu.edu.cn.}  }
\date{\today}
\begin{document}
\maketitle

\begin{abstract}
Basing upon the recent development of the Patterson-Sullivan measures with a H\"older continuous nonzero potential function, we use tools of both dynamics of geodesic flows and geometric properties of negatively curved manifolds to present a new formula illustrating the relation between the exponential decay rate of Patterson-Sullivan measures with a H\"older continuous potential function and the corresponding critical exponent.
\\

\noindent {\bf Keywords and phrases:}  Geodesic flows, Patterson-Sullivan measures, Critical exponent.\\

\noindent {\bf AMS Mathematical subject classification (2010):}
37D40, 37B05.
\end{abstract}
\section{\bf Introduction}\label{sec1}

\setcounter{section}{1}
\setcounter{equation}{0}\setcounter{theorem}{0}

This article is devoted to the study of the properties of Patterson-Sullivan measures with nonzero potential functions
on the ideal boundary $X(\infty)$ of a simply connected negatively curved Riemannian manifold $X$.

There are various families of measures on $X(\infty)$ indexed by points of $X$
and the members of each family belong to a same measure class. Among these, three kinds of measures,
the Lebesgue measures, the harmonic measures and the Patterson-Sullivan measures, are particularly important.

The topic of this article concerns the Patterson-Sullivan measures, which was first introduced and studied by S. J. Patterson
in the setting of Fuchsian groups (cf.~\cite{Pa}). He constructed a family of absolutely continuous measures
supported on the limit set of the ideal boundary of a Fuchsian group.
Subsequently D. Sullivan extends this construction onto general real hyperbolic spaces (cf.~\cite{Su}).
Then C. Yue (\cite{Yu}) and T. Roblin (\cite{Ro1, Ro}) generalize these results to manifolds of negative curvature.
Recently, Paulin-Pollicott-Schapira (\cite{PPS}) developed a theory of Patterson-Sullivan measures with a nonzero potential function $F$,
and showed that this new kind of measures share many important properties with the classical ones.
Pit-Schapira (\cite{PS}) called it the \emph{Patterson-Sullivan-Gibbs measure}.

The Patterson-Sullivan measures build a connection of the actions of the limit set of a discrete group on the universal covering manifold
with the ergodic theory of the geodesic flow on the quotient manifold, hence play
a significant role in the study of the dynamics of geodesic flows nowadays (cf.~\cite{Ka, Kn1, Kn2, LWW, PS, Ri}).
In \cite{Ka}, V. A. Kaimanovich showed that there exists a natural $1$-to-$1$ correspondence between the set of finite invariant
measures of the geodesic flow on $T^{1}M$ and the set of $\Gamma$-invariant Radon measures on $X^{2}(\infty)$,
and explained that how to construct $\Gamma$-invariant measures on $X^{2}(\infty)$ from the measures on $X(\infty)$ (for example, the Patterson-Sullivan measures and the harmonic measures),
where $M = X/\Gamma$ is a compact quotient manifold and $X^{2}(\infty) = X(\infty)\times X(\infty)\setminus \{(\xi,\xi)| \xi\in X(\infty)\}$.
Furthermore he revealed several properties of these measures.

In this article, we will focus on Patterson-Sullivan-Gibbs measures rather than the classical Patterson-Sullivan measures and generalize
some results of Kaimanovich's.

\section{\bf Basic Concepts and the Main Result}\label{sec2}

\setcounter{section}{2}
\setcounter{equation}{0}\setcounter{theorem}{0}

Let $M$ be a smooth compact negatively curved manifold with pinched sectional curvature $-b^2\leq K \leq -a^2 (b>a>0)$, and $X$ be its Riemannian universal covering manifold. Thus $M = X/\Gamma$ where $\Gamma$ is the fundamental group of $M$. Let $T^1 M$ (resp. $T^1 X$) denote the unit tangent bundle of $M$ (resp. $X$).

For any point $p \in M$ or $X$, and for all $v \in T_{p}M$ or $T_{p}X$,
let $\gamma_{v}$ be the unique geodesic satisfying the initial conditions $\gamma_v(0)=p$ and $\gamma'_v(0)=v$.
In order to simplified the notations, we use $\phi_{t}(v)=\gamma'_v(t)$ to denote the geodesic flow both on $T^{1}M$ and $T^{1}X$.

Two geodesics $\gamma_{1}$ and $\gamma_{2}$ in $X$ are called positively asymptotic (resp. negatively asymptotic), if there exists $C>0$ such that
\[
d(\gamma_{1}(t),\gamma_{2}(t))\leq C,~\forall t \geq 0 ~~(resp.  ~\forall t \leq 0).
\]

Here $d$ is the distance function induced by the Riemannian metric. It is easy to see that the positive asymptoticity (resp. negatively asymptoticity) establishes an equivalence relation on the set of all the geodesics on $X$.
Given a geodesic $\gamma$, we use $\gamma(+\infty)$ (resp. $\gamma(-\infty)$) to denote the equivalence class of geodesics positively asymptotic (resp. negatively asymptotic) to $\gamma$. Also we can consider this equivalence class as a point at infinity. The set of all points at infinity is usually denoted by $X(\infty)$.

In \cite{PPS}, Paulin-Pollicott-Schapira has made a comprehensive and systematic study of Patterson-Sullivan measures with nonzero potential function $F$. The following notations and results are cited from $\S3.1-\S3.6$ of \cite{PPS}.

Let $F:T^{1}M \rightarrow \mathbb{R}$ be a H\"older continuous function and $\widetilde{F}:T^{1}X \rightarrow \mathbb{R}$ be the lift function of $F$, thus $\widetilde{F}$ is a H\"older continuous, $\Gamma$-invariant function, called a \emph{potential}.

For any $x$, $y\in X$, let
\[\int_{x}^{y} \widetilde{F} = \int_{0}^{d(x,y)} \widetilde{F}(\phi_{t}(v))dt,
\]
where $v\in T_{x}^{1}X$ such that $\pi(\phi_{d(x,y)}(v))=y$. Here $\pi : T^{1}X \rightarrow X$ is the standard projection map and $\phi_{t} : T^{1}X\rightarrow T^{1}X$ is the geodesic flow.

Fix $x,y\in X$, the Poincare series of $(\Gamma,F)$ is the map
\begin{eqnarray*}
Q_{\Gamma,F,x,y} :&\R& \rightarrow [0,+\infty],\\
& s & \mapsto Q_{\Gamma,F,x,y}(s) = \sum_{\gamma \in \Gamma}\mathrm{e}^{\int_{x}^{\gamma y}(\widetilde{F}-s)}.
\end{eqnarray*}

We define the critical exponent $\delta_{\Gamma,F}$ of $(\Gamma ,F)$ by
\[
	\delta_{\Gamma,F} = \limsup_{n\rightarrow +\infty} \frac{1}{n} \log \sum_{\substack{\gamma\in\Gamma \\ n-1<d(x,\gamma y)\leq n}} \mathrm{e}^{\int_{x}^{\gamma y} \widetilde{F}}\quad\in [-\infty,+\infty].
\]

In \cite{PPS}, Paulin-Pollicott-Schapira proved the following result.

\begin{theorem}[cf.~Paulin-Pollicott-Schapira \cite{PPS}]\label{them1}
If $\delta_{\Gamma,F}<\infty$, then there exists a family of finite nonzero (positive Borel) measures $\{\mu^{F}_x\}_{x\in X}$ on $X(\infty)$, such that, for any $\gamma \in \Gamma$, for any $x,y\in X$, and for each $\xi\in X(\infty)$, we have
\begin{eqnarray*}
\gamma_{\ast}\mu^{F}_{x} &=& \mu^{F}_{\gamma x},\\
\frac{d\mu^{F}_{x}}{d\mu^{F}_y}(\xi) &=& e^{-C_{F-\delta_{\Gamma,F},\xi}(x,y)},
\end{eqnarray*}
where $C_{F-\delta_{\Gamma,F},\xi}(x,y) = \lim\limits_{t\rightarrow +\infty}\left\{ \int_{y}^{\xi_{t}}(\widetilde{F}-\delta_{\Gamma,F})-
\int_{x}^{\xi_{t}}(\widetilde{F}-\delta_{\Gamma,F})\right\}$ is the Gibbs cocycle for the potential function $F$, here $t\mapsto \xi_{t}$ is any geodesic ray ending at $\xi \in X(\infty)$.
\end{theorem}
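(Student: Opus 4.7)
The plan is to follow Patterson's original construction, adapted to the setting with a H\"older potential $\widetilde F$ in place of the constant cocycle. Fix a base point $y\in X$. For each real number $s>\delta_{\Gamma,F}$ the Poincar\'e series $Q_{\Gamma,F,x,y}(s)$ converges, so I can define a probability measure on the compactification $\overline X = X\cup X(\infty)$ by
\[
\mu^{F}_{x,s} \;=\; \frac{1}{Q_{\Gamma,F,x,y}(s)}\sum_{\gamma\in\Gamma}\mathrm{e}^{\int_{x}^{\gamma y}(\widetilde F-s)}\,\delta_{\gamma y}.
\]
Since $\overline X$ is compact and metrizable, I can extract a weak-$\ast$ subsequential limit $\mu^{F}_x$ as $s\searrow\delta_{\Gamma,F}$. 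Doing this simultaneously for all $x$ (e.g.\ by a diagonal argument over a countable dense set) produces a candidate family indexed by $x\in X$.

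The first delicate point is that the measures $\mu^{F}_x$ must be supported on $X(\infty)$ rather than on the orbit $\Gamma y$. This is automatic when the Poincar\'e series diverges at the critical exponent, because then mass escapes to infinity. When $(\Gamma,F)$ is of convergence type, I would use \emph{Patterson's trick}: construct a slowly varying function $h\colon[0,\infty)\to(0,\infty)$ such that replacing $\mathrm{e}^{\int_{x}^{\gamma y}(\widetilde F-s)}$ by $h(d(x,\gamma y))\mathrm{e}^{\int_{x}^{\gamma y}(\widetilde F-s)}$ keeps the abscissa of convergence equal to $\delta_{\Gamma,F}$ but forces divergence at $s=\delta_{\Gamma,F}$; because $h$ varies slowly, all estimates below will be unaffected in the limit.

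Next, I verify the transformation rule. Equivariance $\gamma_\ast \mu^{F}_x = \mu^{F}_{\gamma x}$ is seen at the level of the approximants by reindexing the sum $\sum_{\eta\in\Gamma}$ as $\sum_{\eta'=\gamma\eta}$ and using $\Gamma$-invariance of $\widetilde F$; the shift of basepoint from $x$ to $\gamma^{-1}x$ only rescales every coefficient by the same (finite) factor, which cancels after normalization. The Radon-Nikodym identity comes from comparing the two approximants $\mu^{F}_{x,s}$ and $\mu^{F}_{y,s}$: the ratio of weights at the Dirac mass $\delta_{\gamma y}$ is
\[
\exp\!\Bigl(\int_{x}^{\gamma y}(\widetilde F-s) - \int_{y}^{\gamma y}(\widetilde F-s)\Bigr),
\]
and for a sequence $\gamma_n y\to\xi\in X(\infty)$ this ratio converges to $-C_{F-\delta_{\Gamma,F},\xi}(x,y)$ in the exponent by the very definition of the Gibbs cocycle as a limit along any geodesic ray. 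A uniform-on-compacta continuity statement for the map $\xi\mapsto C_{F-s,\xi}(x,y)$, standard in negative curvature, lets me push this pointwise comparison through the weak-$\ast$ limit and conclude the absolute continuity formula.

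The main obstacles are the two technical facts underlying the scheme rather than the outline itself: first, constructing Patterson's slowly varying function $h$ so that the renormalized series remains critical but divergent (this needs that $\delta_{\Gamma,F}$ is finite and that the number of lattice points in annuli grows with controlled exponential rate, both of which follow from the pinched negative curvature assumption and $\delta_{\Gamma,F}<\infty$); and second, establishing the requisite uniform continuity and boundedness of the Gibbs cocycle along approaches to $X(\infty)$, using H\"older regularity of $\widetilde F$ together with the exponential divergence of geodesics in curvature $\leq -a^2<0$. Once these ingredients are in hand, the remaining steps are bookkeeping with the Poincar\'e series.
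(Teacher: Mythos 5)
This theorem is not proved in the paper: it is quoted verbatim from Paulin--Pollicott--Schapira \cite{PPS}, as the citation in the theorem header indicates. So there is no in-paper argument for you to match; what you have sketched is, in broad outline, the PPS construction (Patterson's orbit-sum construction adapted to a potential), and the overall structure is right.

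There is, however, a genuine slip in your normalization that breaks the Radon--Nikodym identity. You divide by $Q_{\Gamma,F,x,y}(s)$, a constant that depends on $x$, which makes each $\mu^F_{x,s}$ a probability measure and hence each weak-$\ast$ limit $\mu^F_x$ a probability. If $\frac{d\mu^F_x}{d\mu^F_y}(\xi) = e^{-C_{F-\delta_{\Gamma,F},\xi}(x,y)}$ held exactly, integrating over $X(\infty)$ would force $\int_{X(\infty)} e^{-C_{F-\delta_{\Gamma,F},\xi}(x,y)}\,d\mu^F_y(\xi) = 1$ for all $x,y$, which fails for a generic nonzero cocycle. Tracing the error: the ratio of your approximant weights at $\delta_{\gamma y}$ is not just $\exp\bigl(\int_x^{\gamma y}(\widetilde F - s) - \int_y^{\gamma y}(\widetilde F - s)\bigr)$; it also carries the prefactor $Q_{\Gamma,F,y,y}(s)/Q_{\Gamma,F,x,y}(s)$, which has no reason to tend to $1$ as $s\searrow\delta_{\Gamma,F}$. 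The standard fix --- and what PPS do --- is to normalize \emph{all} the approximants by the \emph{same} constant $Q_{\Gamma,F,y,y}(s)$ (or by the series based at any fixed reference point), so that $\mu^F_{x,s}$ has total mass merely bounded between two positive constants rather than equal to $1$, and the offending prefactor disappears. The other two technical ingredients you flag --- Patterson's slowly varying function, and extending the Gibbs-cocycle limit from geodesic rays to arbitrary orbit sequences $\gamma_n y \to \xi$ --- are correctly identified as the real work; both are carried out in \cite{PPS}, the second via the H\"older continuity of $\widetilde F$ together with the exponential convergence of asymptotic geodesics in pinched negative curvature.
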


$\{\mu^{F}_{x}\}_{x\in X}$ are called \emph{Patterson-Sullivan(-Gibbs) measures of dimension $\delta_{\Gamma,F}$}. When $F \equiv 0$, this definition coincides with the classical Patterson-Sullivan measures. 
By the definition, we know that all $\mu^{F}_x (x\in X)$ belong to the same measure class.

Let $\lambda$ be the Liouville measure on the unit tangent bundle $T^{1}X$. Since the geodesic flow $\phi_t: T^{1}M \rightarrow T^{1}M$
is ergodic with respect to the Liouville measure on $T^{1}M$ (see \cite{An}), by Birkhorff ergodic theorem, for $\lambda$-a.e. $v\in T^{1}X$, the following limit
\[
\lim\limits_{t\rightarrow +\infty} \frac{1}{t} \int_{0}^{t} \widetilde{F}(\gamma_v'(s))ds
\]
exists and is independent of $v$, it is in fact the integral of $F$ with respect to the Liouville measure on $T^{1}M$.
We denote this limit by $\lambda_F$, i.e., for $\lambda$-a.e. $v\in T^{1}X$,
\[
\lambda_F = \lim\limits_{t\rightarrow +\infty} \frac{1}{t} \int_{0}^{t} \widetilde{F}(\gamma_v'(s))ds.
\]

The following theorem is the main result of this article. A special case when $F\equiv 0$ has been proved by Kaimanovich in \cite{Ka}.

\begin{theorem}\label{them2}
Let $M,X,\widetilde{F},F$ and $\Gamma$ are the ones as mentioned above. Let $\{\mu^{F}_{x}\}_{x\in X}$ be the Patterson-Sullivan measures constructed in $Theorem~\ref{them1}$, then for any $x\in X$ and $\lambda$-a.e. $v\in T^{1}X$,
\[
\lim\limits_{t\rightarrow +\infty} \frac{1}{t} \log \mu^{F}_x (B_{x,t}(\gamma_v(+\infty)))=-\delta_{\Gamma,F}+\lambda_F
\]
\end{theorem}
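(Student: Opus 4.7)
The plan is to combine two ingredients: (i) a Mohsen-type shadow lemma for the Patterson-Sullivan-Gibbs measures, which is part of the PPS toolbox developed in \cite{PPS}, and which yields multiplicative two-sided bounds of $\mu^F_x$ on "dynamical balls" at the boundary in terms of the Gibbs potential integrated along the geodesic; and (ii) Birkhoff's ergodic theorem applied to the geodesic flow on $T^1M$ with respect to the Liouville measure, whose ergodicity is due to Anosov.

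First I would recall/establish a shadow lemma of the following form: there exist $R_0 > 0$ and $C > 1$ such that for every $v \in T^1X$ based at (or near) $x$ and every $t \geq 0$,
\[
C^{-1}\mathrm{e}^{\int_{x}^{\gamma_v(t)}(\widetilde{F}-\delta_{\Gamma,F})} \leq \mu^F_x\bigl(B_{x,t}(\gamma_v(+\infty))\bigr) \leq C\mathrm{e}^{\int_{x}^{\gamma_v(t)}(\widetilde{F}-\delta_{\Gamma,F})}.
\]
This requires identifying (or sandwiching, up to shifting $t$ by a bounded amount) the ball $B_{x,t}(\gamma_v(+\infty))$ at infinity with the shadow from $x$ of a ball of radius $R_0$ around $\gamma_v(t)$; this comparability follows from the pinched curvature bounds $-b^2 \leq K \leq -a^2$ via standard CAT$(-a^2)$ divergence estimates for geodesic rays, together with the H\"older regularity and uniform boundedness of $F$, which ensure that the Gibbs cocycle $C_{F-\delta_{\Gamma,F},\xi}(x,y)$ differs from $\int_{x}^{\gamma_v(t)}(\widetilde{F}-\delta_{\Gamma,F})$ only by an additive constant uniform in $\xi \in B_{x,t}(\gamma_v(+\infty))$.

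Second, by the definition of $\int_x^y\widetilde{F}$ recalled in Section 2, whenever $v \in T^1_xX$ we have $\int_x^{\gamma_v(t)}\widetilde{F} = \int_0^t \widetilde{F}(\phi_s v)\,ds$, and for general $v \in T^1X$ the two quantities differ by $O(1)$ controlled by $\|F\|_\infty\cdot d(x,\pi(v))$, which disappears after dividing by $t$. Since $\widetilde{F}$ is $\Gamma$-invariant it descends to $F$ on $T^1M$, and by the Anosov ergodicity of $\phi_t$ with respect to the Liouville measure $\lambda$, Birkhoff's theorem gives, for $\lambda$-a.e.\ $v \in T^1 X$,
\[
\frac{1}{t}\int_0^t \widetilde{F}(\phi_s v)\,ds \longrightarrow \lambda_F.
\]
Taking logarithms of the shadow-lemma estimate, dividing by $t$, and letting $t \to \infty$, the additive constants $\pm\log C$ and the basepoint correction vanish in the limit, and one obtains
\[
\lim_{t\to+\infty}\frac{1}{t}\log\mu^F_x\bigl(B_{x,t}(\gamma_v(+\infty))\bigr) = \lim_{t\to+\infty}\frac{1}{t}\int_0^t \widetilde{F}(\phi_s v)\,ds - \delta_{\Gamma,F} = \lambda_F - \delta_{\Gamma,F},
\]
as claimed.

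The main technical obstacle is the first step: precisely matching the boundary balls $B_{x,t}(\gamma_v(+\infty))$ with the shadow sets to which the Mohsen-type shadow lemma of \cite{PPS} applies. In Kaimanovich's case $F\equiv 0$ this is done via the standard visual/Gromov metric on $X(\infty)$, but for a nonzero H\"older potential one must additionally control the oscillation of the Gibbs cocycle across the different geodesic segments entering a fixed ball around $\gamma_v(t)$. The pinched-curvature geometry of $X$ together with H\"older continuity and boundedness of $F$ supplies the uniform estimate that is required, but verifying it carefully (with explicit dependence on the curvature bounds $a,b$ and on the H\"older data of $F$) is where most of the work lies; once the shadow estimate is in hand, the remainder of the argument is a routine combination with Birkhoff.
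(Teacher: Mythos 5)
Your proposal is essentially the paper's argument, presented with the shadow-type estimate outsourced rather than proved. The paper's core inequality~(\ref{6}) together with Lemmas~\ref{lem1} and~\ref{l2} \emph{is} a Mohsen-type shadow estimate for the balls $B_{x,t}$: equation~(\ref{6}) records the bounded oscillation of the Gibbs cocycle over $B_{x,t}(\gamma_v(+\infty))$ when comparing the basepoints $x$ and $y=\gamma_v(t)$, and Lemmas~\ref{lem1} and~\ref{l2} supply the two-sided uniform bounds on $\mu^F_{\gamma_v(t)}\bigl(B_{x,t}(\gamma_v(+\infty))\bigr)$. Combining these gives exactly your claimed estimate $\mu^F_x(B_{x,t})\asymp \mathrm{e}^{\int_x^{\gamma_v(t)}(\widetilde F-\delta_{\Gamma,F})}$, after which both proofs finish identically via Birkhoff and Anosov ergodicity.

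The one place where the routes diverge is in \emph{how} the uniform lower measure bound is obtained. You propose to import the PPS shadow lemma and then reconcile their shadow sets $\mathcal{O}_xB(\gamma_v(t),R_0)$ with the present paper's visual balls $B_{x,t}$ via pinched-curvature comparison; this is workable but you correctly flag it as the locus of most of the technical effort, and you leave it unverified. The paper instead proves the lower bound directly and self-containedly: Lemma~\ref{lem1} uses density of axial endpoints in $X(\infty)$ and the north--south dynamics of axial isometries (Proposition~\ref{pro}, from Ballmann) together with $\Gamma$-quasi-invariance of $\mu^F_x$ to show $\mu^F_x\bigl(A_{x,\pi/2}(\xi)\bigr)>C$ uniformly, and Lemma~\ref{l2} is an elementary pinched-curvature argument identifying the half-space cone $A_{\gamma(t+K),\pi/2}(\xi)$ as a subset of $B_{x,t}(\xi)$. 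Whichever route you take, the missing ingredient in your write-up is precisely this uniform lower bound; without it (or without carefully carrying out the $B_{x,t}$-to-shadow sandwiching), the shadow estimate you state remains an assertion rather than a proof. Everything else --- the Radon--Nikodym/Gibbs-cocycle computation, the reduction to a Birkhoff average, the vanishing of additive $O(1)$ terms after dividing by $t$ --- matches the paper.
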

Here $B_{x,t}(\gamma_v(+\infty))\subseteq X(\infty)$ is a neighborhood of $\gamma_v(+\infty)$ in $X(\infty)$ and its detailed definition will be given in the next section.

Obviously the limit value $-\delta_{\Gamma,F}+\lambda_F\leq 0$ since the measure $\mu^F_x$ is finite. A straightforward corollary follows from this observation.

\begin{corollary}
Under the same condition in Theorem \ref{them2}, we have the inequality $$\lambda_F\leq\delta_{\Gamma,F}.$$
\end{corollary}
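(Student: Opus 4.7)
The plan is to read this corollary as an immediate consequence of Theorem~\ref{them2} combined with the fact, already built into Theorem~\ref{them1}, that each $\mu^F_x$ is a \emph{finite} positive Borel measure on $X(\infty)$. I would not redo any of the dynamical work behind Theorem~\ref{them2}; the corollary is extracted purely by bounding the measure of a subset by the total mass.

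Concretely, fix any $x\in X$ and set $C_x:=\mu^F_x(X(\infty))$. By Theorem~\ref{them1}, $0<C_x<+\infty$. Since $B_{x,t}(\gamma_v(+\infty))\subseteq X(\infty)$ for every $v\in T^1X$ and every $t>0$, monotonicity of the measure gives
\[
\mu^F_x\bigl(B_{x,t}(\gamma_v(+\infty))\bigr)\leq C_x.
\]
Taking logarithms (the measure is nonzero on neighborhoods of points in the support, so the log is finite along the relevant sequence, or else the inequality is trivial), dividing by $t>0$, and letting $t\to+\infty$ yields
\[
\limsup_{t\to+\infty}\frac{1}{t}\log\mu^F_x\bigl(B_{x,t}(\gamma_v(+\infty))\bigr)\leq \lim_{t\to+\infty}\frac{\log C_x}{t}=0.
\]

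By Theorem~\ref{them2}, there is a set of $v\in T^1X$ of positive (in fact full) Liouville measure on which the left-hand limit exists and equals $-\delta_{\Gamma,F}+\lambda_F$. Picking any such $v$ and combining with the previous display gives $-\delta_{\Gamma,F}+\lambda_F\leq 0$, i.e.\ $\lambda_F\leq\delta_{\Gamma,F}$, which is the asserted inequality. There is essentially no obstacle: the only substantive ingredient is the finiteness of $\mu^F_x$, which is explicitly part of the conclusion of Theorem~\ref{them1}, together with the existence of at least one $v$ realizing the limit in Theorem~\ref{them2}, which is guaranteed because Liouville measure on $T^1X$ is nontrivial.
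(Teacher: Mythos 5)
Your argument is correct and is exactly the paper's reasoning: the paper dismisses the corollary in one line by noting that the limit in Theorem~\ref{them2} must be $\leq 0$ because $\mu^F_x$ is a finite measure, which is precisely the bound you make explicit.
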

We must point out that this corollary present the relation between the critical exponent of $(\Gamma, F)$ and the average of the H\"older continuous potential $F$.
In the case $F\equiv 0$, this is straightforward. However, when $F$ is a non-zero potential function, this relation is highly nontrivial in general.

Although it is in the early stage of development, we are optimistic about broad application prospects of this theorem on both dynamics and geometry, and outline our future research regarding related topics. Our plan for next paper will be on computing the Hausdorff dimension of $\mu^{F}_{x}$ (see for example \cite{Ka}).

In the next section we will give the proof of Theorem $\ref{them2}$. Unlike the traditional method based on the theory of hyperbolic groups like in \cite{Ka}, we introduce the dynamical properties of negatively curved manifolds to build this formula.
\section{\bf Proof of Theorem~$\ref{them2}$ }

\setcounter{section}{3}
\setcounter{equation}{0}\setcounter{theorem}{0}

Let $\mbox{Isom}(X)$ be the group of all isometry transformations of $X$. We call an isometry $\alpha\in \mbox{Isom}(X)$ an \emph{axial element} if there exists a geodesic $\gamma$ in $X$ and a $T>0$ such that for any $t\in\R$, $~\alpha(\gamma(t))=\gamma(t+T)$. Correspondingly $\gamma$ is called the \emph{axis} of $\alpha$.
In fact when $M = X/\Gamma$ is compact, every element of $\Gamma$ is an axial element. The following result is useful to us.

\begin{proposition}[cf.~Ballmann \cite{Ba}]\label{pro}
Let $X$ be a simply connected manifold with pinched negative curvature and $X/\Gamma$ is compact, if
$\gamma$ is an axis of $\alpha \in \Gamma \subset \mbox{Isom}(X)$, then for any neighborhoods $U\subset X(\infty)$ of $\gamma(-\infty)$
and $V\subset X(\infty)$ of $\gamma(+\infty)$, there exists $N\in \Z^{+}$ such that
\[ \alpha^{n}(X(\infty)-U)\subset V,~~~~\alpha^{-n}(X(\infty)-V)\subset U, ~\forall~n\geq N.
\]
\end{proposition}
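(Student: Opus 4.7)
The plan is to derive the classical north--south dynamics of $\alpha$ on $X(\infty)$: $\gamma(+\infty)$ is an attracting fixed point, $\gamma(-\infty)$ a repelling one, and the complements of arbitrarily small neighborhoods of $\gamma(-\infty)$ are swept into any preassigned neighborhood of $\gamma(+\infty)$ by sufficiently high iterates of $\alpha$. Throughout, I would work in the cone topology on $\bar{X}=X\cup X(\infty)$ based at $o=\gamma(0)$.

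\textbf{Step 1 (interior attraction).} First I would establish $\alpha^{n}(p)\to\gamma(+\infty)$ for each $p\in X$. Since $\alpha(\gamma(t))=\gamma(t+T)$, iteration yields $\alpha^{n}(o)=\gamma(nT)$, whence $d(\alpha^{n}(p),\gamma(nT))=d(p,o)$ is $n$-independent. As $\gamma(nT)\to\gamma(+\infty)$ in the cone topology, and sequences at uniformly bounded Riemannian distance from one another share the same boundary limit in pinched negative curvature, we conclude $\alpha^{n}(p)\to\gamma(+\infty)$.

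\textbf{Step 2 (uniform angle control at the boundary).} Next, on $X(\infty)\setminus\{\gamma(-\infty)\}$ consider the continuous angle function $\theta(\xi):=\angle_{o}(\gamma(-\infty),\xi)>0$. Because $\alpha$ is an isometry fixing $\gamma(-\infty)$ and sending $o$ to $\gamma(nT)$, we have $\angle_{\gamma(nT)}(\gamma(-\infty),\alpha^{n}(\xi))=\theta(\xi)$. On the compact set $K:=X(\infty)\setminus U$ (compact because $X(\infty)$ is compact in the cone topology, and disjoint from $\gamma(-\infty)$ by hypothesis on $U$), continuity of $\theta$ yields a uniform lower bound $\theta_{0}:=\inf_{K}\theta>0$.

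\textbf{Step 3 (shadowing and conclusion).} Finally I would invoke a geodesic-shadowing estimate from pinched negative curvature: given $\theta_{0}>0$ and a neighborhood $V$ of $\gamma(+\infty)$, there exists $N=N(V,\theta_{0},a,b)$ such that if $n\ge N$ and $q\in\bar{X}$ satisfies $\angle_{\gamma(nT)}(\gamma(-\infty),q)\ge\theta_{0}$, then the geodesic from $o$ to $q$ fellow-travels $\gamma|_{[0,nT]}$ up to a bounded error determined by $\theta_{0}$ and the curvature bounds, forcing $q\in V$. Applied with $q=\alpha^{n}(\xi)$ for $\xi\in K$ and combined with Step 2, this gives $\alpha^{n}(K)\subset V$ for all $n\ge N$, which is the first inclusion. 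The second inclusion follows symmetrically by applying the same argument to $\alpha^{-1}$, whose axis is $\gamma$ traversed in reverse, so that the roles of $\gamma(\pm\infty)$ interchange.

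\textbf{Main obstacle.} The nontrivial ingredient is the uniform shadowing estimate in Step 3, with dependence on $\theta_{0}$ alone and not on $q$ itself. It rests on the visibility axiom in pinched negative curvature together with the comparison theorems: the upper bound $K\le -a^{2}$ drives exponential divergence of geodesics, while the lower bound $K\ge -b^{2}$ yields uniform thinness of ideal triangles. Rather than reprove these, I would cite the relevant lemmas from Ballmann's framework directly and verify that the hypotheses fit our situation.
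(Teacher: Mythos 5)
The paper does not actually prove Proposition 3.1: it cites it directly from Ballmann~\cite{Ba}, so there is no in-paper argument to compare yours against. Judged on its own terms, your outline of the north--south dynamics is the standard and correct route, and Steps~1 and~2 are fully rigorous as written (the isometry-and-fixed-point argument that transports the angle bound from $o$ to $\gamma(nT)$ is exactly the right lever). Step~3 is where the real content lives, and you are honest that you would import the uniform shadowing estimate rather than reprove it; that claim can indeed be made precise via thin triangles plus exponential divergence of geodesics issuing from $\gamma(nT)$ at angle $\ge\theta_0$ from the $\gamma(-\infty)$-direction, which forces $[o,q]$ to fellow-travel $\gamma([0,nT])$ except on a tail of length controlled by $\theta_0$ and $\delta$-hyperbolicity, hence $q$ lies in a cone $A_{o,\epsilon_n}(\gamma(+\infty))$ with $\epsilon_n\to 0$. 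The only slip is in your closing remarks on the ``Main obstacle'': thinness of (ideal) triangles and $\delta$-hyperbolicity come from the \emph{upper} curvature bound $K\le -a^2$, not from the lower bound $K\ge -b^2$; the lower bound is used elsewhere (e.g.\ angle comparison from below, regularity of the cone topology), but it is not what gives thin triangles. With that attribution corrected, the sketch is sound, and it fills in detail the paper itself delegates entirely to~\cite{Ba}.
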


In fact Ballmann proved this result for a broader class of manifolds known as rank $1$ manifolds of non-positive curvature.
Later Watkins (\cite{Wa}) and Liu-Wang-Wu (\cite{LWW}) extended his result onto the rank $1$ manifolds without focal points.
Refer to \cite{LLW,LW,LWW,LWW1,LWW2,LZ} for more information about recent works of the geometries and dynamics of geodesic flows
on manifolds without focal/conjugate points.

In general, $X(\infty)$ can be identified with the $(dim X-1)$-sphere $T_{x}^{1}X$ at any point $x\in X$.
For any geodesic $\gamma$ in $X$ and for any point $x\in X$, we can find a unique geodesic $\beta$ in $X$ starting from $x$ and is
positive asymptotic to $\gamma$, i.e. $\beta(0) = x$ and $\beta(+\infty) = \gamma(+\infty)$. For more details, see \cite{EO}.
Thus for any $\xi\in X(\infty)$ and $x\in X$, we use $\gamma_{x,\xi}$ to denote the geodesic connecting $x$ and $\xi$. That is to say
$\gamma_{x,\xi}(0)= x$ and $\gamma_{x,\xi}(+\infty)=\xi$.
Furthermore, for each $x\in X$ and $\xi,\eta\in X(\infty)$, we can define the angle between $\xi$ and $\eta$ seen from $x$ by
\[\angle_x(\xi,\eta) ~:=~ \angle_x(\gamma_{x,\xi}'(0),\gamma_{x,\eta}'(0)).
\]

Now for each $x\in X$ and $\xi \in X(\infty)$, $\forall a>0$, define
\[A_{x,a}(\xi)~=~\{\eta \in X(\infty)~|~\angle_x(\xi,\eta)<a\}\]
to be the cone neighborhood of $\xi$ in $X(\infty)$.
For each $x\in X$, define a distance function $d_x$ on $X(\infty)=\{(\xi,\eta)\in X(\infty)\times X(\infty) | \xi\neq\eta\}$ by
\[d_x(\xi,\eta)=t~\Leftrightarrow~d(\gamma_{x,\xi}(t),\gamma_{x,\eta}(t))=1.\]
Under this distance $d_x,\forall t\geq \frac{1}{2}$, we define a neighborhood of $\xi\in X(\infty)$ by
\[B_{x,t}(\xi)~=~\{\eta \in X(\infty)~|~d_x(\xi,\eta)>t\}.\]

\begin{lemma}\label{lem1}
There exists $C>0$ such that for any $x\in X$ and for any $\xi \in X(\infty)$, we have $\mu^{F}_x(A_{x,\frac{\pi}{2}}(\xi))>C$.
\end{lemma}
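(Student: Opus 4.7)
The plan is to argue by compactness and contradiction, reducing to the statement that a single open hemisphere-cone carries positive Patterson-Sullivan-Gibbs mass. First, exploiting the $\Gamma$-equivariance $\gamma_\ast \mu^F_x = \mu^F_{\gamma x}$ from Theorem~\ref{them1}, fix a compact fundamental domain $D \subset X$ for $\Gamma$. For any $x \in X$, pick $\gamma \in \Gamma$ with $\gamma x \in D$, and use that $\gamma$ is an isometry and hence angle-preserving, so
\[
\gamma\!\bigl(A_{x,\pi/2}(\xi)\bigr) = A_{\gamma x,\pi/2}(\gamma \xi), \qquad \mu^F_x\!\bigl(A_{x,\pi/2}(\xi)\bigr) = \mu^F_{\gamma x}\!\bigl(A_{\gamma x,\pi/2}(\gamma \xi)\bigr).
\]
It therefore suffices to produce a positive lower bound for $\mu^F_x(A_{x,\pi/2}(\xi))$ uniformly over $(x,\xi) \in D \times X(\infty)$.

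Next, suppose for contradiction that this infimum is zero; by compactness of $D$ and of $X(\infty)$, extract subsequences $x_n \to x_\infty \in D$ and $\xi_n \to \xi_\infty \in X(\infty)$ with $\mu^F_{x_n}(A_{x_n,\pi/2}(\xi_n)) \to 0$. The analytic core is that $\mu^F_{x_n} \to \mu^F_{x_\infty}$ in total variation: Theorem~\ref{them1} identifies the Radon-Nikodym density $d\mu^F_{x_n}/d\mu^F_{x_\infty}(\eta) = \exp(-C_{F-\delta_{\Gamma,F},\eta}(x_n,x_\infty))$, and a direct estimate (bounding the integrals defining the cocycle by the boundedness of $\widetilde F$ and the triangle inequality for distances to points $\xi_t$) shows the Gibbs cocycle is Lipschitz in $(x,y)$ uniformly in $\eta \in X(\infty)$; hence this density converges to $1$ uniformly in $\eta$. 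Concurrently, the angle function $(x,\xi,\eta) \mapsto \angle_x(\xi,\eta)$ is continuous off the diagonal of $X(\infty)^2$, so a uniform-continuity argument on the compact set with $\eta$ ranging over $\overline{A_{x_\infty,\pi/2-\v}(\xi_\infty)}$ yields, for every $\v > 0$, the nesting $A_{x_\infty,\pi/2-\v}(\xi_\infty) \subset A_{x_n,\pi/2}(\xi_n)$ for all large $n$. Combining these,
\[
0 = \liminf_n \mu^F_{x_n}\!\bigl(A_{x_n,\pi/2}(\xi_n)\bigr) \geq \lim_n \mu^F_{x_n}\!\bigl(A_{x_\infty,\pi/2-\v}(\xi_\infty)\bigr) = \mu^F_{x_\infty}\!\bigl(A_{x_\infty,\pi/2-\v}(\xi_\infty)\bigr),
\]
and letting $\v \to 0^+$ via continuity of measure from below forces $\mu^F_{x_\infty}(A_{x_\infty,\pi/2}(\xi_\infty)) = 0$.

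To finish, since $M = X/\Gamma$ is compact the limit set of $\Gamma$ equals $X(\infty)$, and the Paulin-Pollicott-Schapira construction underlying Theorem~\ref{them1} guarantees that $\mathrm{supp}(\mu^F_{x_\infty})$ coincides with the limit set; hence the nonempty open cone $A_{x_\infty,\pi/2}(\xi_\infty)$ has strictly positive mass, contradicting the previous display. The main obstacle I anticipate is the discontinuity of $\xi \mapsto A_{x,\pi/2}(\xi)$ at its spherical boundary; the nested-cone sandwich, together with the uniform-in-$\eta$ Lipschitz control of the Gibbs cocycle, is precisely what is needed to transfer vanishing along the sequence to vanishing in the limit.
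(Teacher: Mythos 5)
Your argument is correct and takes a genuinely different route from the paper's. The paper covers $X(\infty)$ by finitely many small cones $A_{x_0,\theta}(\xi_i)$ with each $\xi_i$ an endpoint of an axis, then invokes Ballmann's north--south dynamics for axial isometries (Proposition~\ref{pro}) to push a fixed positive-measure open set inside each $A_{x_0,\theta}(\xi_i)$; the positivity of the pushed-forward mass is then transferred back through the Radon--Nikodym density. You instead run a soft compactness-and-contradiction argument on the fundamental domain $D\times X(\infty)$: you use continuity of the angle function to nest $A_{x_\infty,\pi/2-\v}(\xi_\infty)\subset A_{x_n,\pi/2}(\xi_n)$ for large $n$, total-variation continuity of $x\mapsto \mu^F_x$ (from uniform convergence of the Gibbs cocycle, i.e.\ of the Radon--Nikodym density, as $x_n\to x_\infty$), and then conclude directly from $\mathrm{supp}\,\mu^F_{x_\infty}=X(\infty)$. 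Both proofs ultimately hinge on full support, but yours bypasses Ballmann's proposition and the density of axes entirely, at the cost of the slightly more delicate cone-nesting and uniform-convergence bookkeeping. Two small remarks: your reduction to $x\in D$ (rather than to a single $x_0$, as the paper somewhat loosely claims) is the cleaner statement of the $\Gamma$-equivariance step; and describing the Gibbs cocycle as ``Lipschitz in $(x,y)$ uniformly in $\eta$'' overstates the regularity of a H\"older potential's cocycle --- what you actually need and have is joint continuity of $(x,y,\eta)\mapsto C_{F-\delta_{\Gamma,F},\eta}(x,y)$ together with $C_{F-\delta_{\Gamma,F},\eta}(x,x)=0$, which on the compact set $\overline{B(x_\infty,1)}\times\{x_\infty\}\times X(\infty)$ already gives $\sup_\eta |C_{F-\delta_{\Gamma,F},\eta}(x_n,x_\infty)|\to 0$ and hence the total-variation convergence.
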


\begin{proof}
We know the Patterson-Sullivan measure $\{\mu^{F}_x\}_{x\in X}$ and the Gibbs cocycle $C_{F-\delta_{\Gamma,F},\cdot}(\cdot,\cdot)$ are $\Gamma$-invariant, and $C_{F-\delta_{\Gamma,F},\xi}(\cdot,\cdot)$ is continuous (cf.~\cite{PPS}). In fact given $M=X/\Gamma$ is compact, we only need to prove Lemma~$\ref{lem1}$ for one fixed point $x_0\in X$.

As we have mentioned in Section \ref{sec2}, $X(\infty)$ is homeomorphic to a $(\dim X-1)$-sphere, thus compact. Therefore there exist finite many points $\{\xi_i\}_{i\in I}\in X(\infty)$ and an angle $\theta>0$ such that $\forall \xi \in X(\infty)$, there exists at least one $\xi_i$ satisfying
\begin{eqnarray}\label{e1}
 A_{x_0,\theta}(\xi_i)\subseteq A_{x_0,\frac{\pi}{2}}(\xi).
\end{eqnarray}
Since the axes are dense in the space of geodesics in X (cf.~\cite{Ba,Kn1,Kn2}), by changing $\theta$ smaller, we can choose each point in $\{\xi _i\}_{i\in I}$ to be an endpoint of some axis of an axial element $\alpha_i\in \mbox{Isom}(X)$.

The compactness of $M=X/\Gamma$ also implies that the total mass of the Patterson-Sullivan measure $\{\mu^{F}_x\}_{x\in X}$ is uniformly bounded both from above and below. Since for any $x\in X$, $\mathrm{Supp}\mu_x=X(\infty)$ (cf.~\cite{Kn1, LWW}),
we can choose two separated open subsets $S_1$ and $S_2$ of $X(\infty)$ such that
\[\mu^{F}_{x_0}(S_i)>0,  ~i=1,2.
\]

For each $\xi_i(i\in I)$, as $S_1$ and $S_2$ are separated, we know either $\xi_i\notin S_1$ or $\xi_i\notin S_2$.
Thus by Proposition $\ref{pro}$, there exists $n_i\in\Z$ such that either $\alpha_{i}^{n_i}S_1$ or $\alpha_{i}^{n_i}S_2$ is contained in $A_{x_0,\theta}(\xi_i),$

Combining (\ref{e1}), we can conclude at least one of the following relations holds
\begin{eqnarray*}
\alpha_{i}^{n_i}S_1~\subseteq~A_{x_0,\theta}(\xi_i)~\subseteq~A_{x_0,\frac{\pi}{2}}(\xi),\\
\alpha_{i}^{n_i}S_2~\subseteq~A_{x_0,\theta}(\xi_i)~\subseteq~A_{x_0,\frac{\pi}{2}}(\xi).
\end{eqnarray*}
Thus in order to prove this lemma, by the finiteness of the set $\{\xi_i\}_i\in I\subseteq X(\infty)$, we only need to show that
\[\mu^{F}_{x_0}(\alpha_{i}^{n_i}S_1)>0,~~\mu^{F}_{x_0}(\alpha_{i}^{n_i}S_2)>0,~~i\in I.
\]

Without loss of generality, we will show $\mu^{F}_{x_0}(\alpha_{i}^{n_i}S_1)>0$.

By the definition, $\mu^{F}_{x_0}(\alpha_{i}^{n_i}S_1)=(\alpha_i^{-n_i} \mu^{F}_{x_0})(S_1)$.
Theorem~$\ref{them1}$ implies that
\[ \mu^{F}_{\alpha_i^{-n_i}x_0}(S_1)=\mu^{F}_{x_0}(\alpha_{i}^{n_i}S_1),
\]
and
\[\frac{d\mu^{F}_{\alpha_i^{-n_i}x_0}}{d\mu^{F}_{x_0}}(\xi)=e^{-C_{F-\delta_{\Gamma,F},\xi}(\alpha_i^{-n_i}x_0,x_0)}.
\]

By the facts that $C_{F-\delta_{\Gamma,F},\xi}(\alpha_i^{-n_i}x_0,x_0)$
is continuous with respect to $\xi\in X(\infty)$ and $X(\infty)$ is compact, we know the function
\[C_{F-\delta_{\Gamma,F,\cdot}}(\alpha_i^{-n_i}x_0,x_0):~~X(\infty)~\rightarrow~\R
\]
is uniformly bounded from both above and below, thus $\mu_{x_0}(S_1) >0$ implies that
\[\mu^{F}_{\alpha_i^{-n_i}x_0}(S_1)>0,\]
therefore
\[\mu^{F}_{x_0}(\alpha_{i}^{n_i}S_1)=\mu^{F}_{\alpha_i^{-n_i}x_0}(S_1)>0.\]

We complete the proof of this lemma.
\end{proof}

\begin{lemma}\label{l2}
For any $x\in X$ and $\xi\in X(\infty)$, let $\g:=\g_{x,\xi}$ denote the geodesic ray connecting $x$ and $\xi$.
There exist positive constants $N$ and $K$ depending only on the curvature bounds, satisfying
\[A_{\g(t+K),\frac{\pi}{2}}(\xi)\subseteq B_{x,t}(\xi),\ \ \ \forall t\geq N.
\]
\end{lemma}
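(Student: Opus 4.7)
My approach combines a CAT$(-a^{2})$ tubular estimate with the convexity of the distance between two geodesic rays issued from $x$. The hypothesis $\eta \in A_{\gamma(t+K),\pi/2}(\xi)$ means $\angle_{y}(\xi,\eta)<\pi/2$ at $y := \gamma(t+K)$; since $y$ lies on $\gamma$ between $x$ and $\xi$, this is equivalent to the obtuse-angle condition $\angle_{y}(x,\eta)>\pi/2$, which is what drives everything below.

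\textbf{Tubular estimate and locating the foot.} I would first show that $d(y,\gamma_{x,\eta})\le C$ for a constant $C$ depending only on the curvature bounds. For each $R>0$ the triangle $(x,y,\gamma_{y,\eta}(R))$ has angle strictly greater than $\pi/2$ at $y$; by the CAT$(-a^{2})$ comparison, the distance from $y$ to the opposite side is bounded above by the analogous distance in an obtuse comparison triangle in $\mathbb{H}^{2}(-a^{2})$, and an explicit computation in the upper half plane shows this latter distance is uniformly bounded in $R$ once $d(x,y)$ is large. Letting $R\to\infty$ the side $[x,\gamma_{y,\eta}(R)]$ converges to $\gamma_{x,\eta}$, yielding the tubular bound. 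Let $p = \gamma_{x,\eta}(s^{*})$ be the nearest point on $\gamma_{x,\eta}$ to $y$; the triangle inequality with $d(x,y)=t+K$, $d(x,p)=s^{*}$, $d(y,p)\le C$ forces $|s^{*}-(t+K)|\le C$.

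\textbf{Convexity and promotion to a strict bound.} The function $f(s):=d(\gamma(s),\gamma_{x,\eta}(s))$ is convex with $f(0)=0$ (a consequence of the CAT$(0)$ property of $X$) and therefore non-decreasing. The triangle inequality through $p$ gives
\[
f(s^{*})\le d(\gamma(s^{*}),y)+d(y,p)\le |s^{*}-(t+K)|+C\le 2C,
\]
so once $K\ge C$ we have $t\le s^{*}$ and monotonicity yields $f(t)\le 2C$. To upgrade this crude bound to the strict inequality $f(t)<1$, I would invoke the sharper exponential convexity available in CAT$(-a^{2})$, namely that $s\mapsto\cosh(af(s))$ is itself convex. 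Combined with $\cosh(af(0))=1$ and $\cosh(af(s^{*}))\le\cosh(2aC)$, the convex-combination inequality yields
\[
\cosh(af(t))\le 1+\tfrac{t}{s^{*}}\bigl(\cosh(2aC)-1\bigr),
\]
and for $K$ (and hence the ratio $t/s^{*}$) taken sufficiently large in terms of the curvature bounds, the right-hand side is strictly less than $\cosh(a)$ for all $t\ge N$, delivering $f(t)<1$.

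\textbf{Main obstacle.} The delicate step is the last one: promoting the bound $f(t)\le 2C$ obtained from linear convexity to the strict inequality $f(t)<1$. The linear convexity of CAT$(0)$ is by itself insufficient since $2C$ need not be $<1$, and it is the sharper convexity of $\cosh(af)$ specific to CAT$(-a^{2})$, together with a quantitative choice of $K$ and $N$, that forces the improvement. In pinched (rather than constant) negative curvature, one must coordinate the upper bound $-a^{2}$ (which controls the tubular constant $C$) with the lower bound
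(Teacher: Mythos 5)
Your approach is genuinely different from the paper's: the paper argues by contradiction, using compactness of $X(\infty)$ together with the foliation of $X$ by hypersurfaces $D_s$ orthogonal to $\gamma$, whereas you pursue a direct quantitative argument by CAT$(-a^2)$ comparison. The first stages of your plan are sound: $\eta\in A_{\gamma(t+K),\pi/2}(\xi)$ does force $\angle_y(x,\eta)>\pi/2$ at $y=\gamma(t+K)$; the obtuse angle plus comparison gives a tubular bound $d(y,\gamma_{x,\eta})\le C$ with $C$ depending only on $a$; the foot placement $|s^*-(t+K)|\le C$ and the monotonicity of the convex function $f$ with $f(0)=0$ are all correct, yielding $f(t)\le f(s^*)\le 2C$.

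The flaw is in the final promotion to $f(t)<1$. Your inequality $\cosh(af(t))\le 1+\tfrac{t}{s^*}\bigl(\cosh(2aC)-1\bigr)$ is a linear-interpolation bound, and you claim that taking $K$ large makes the factor $t/s^*$ small. This is false uniformly over $t\ge N$: since $s^*\le t+K+C$, one has $t/s^*\ge t/(t+K+C)\to 1$ as $t\to\infty$ for any fixed $K$. Hence for large $t$ the right-hand side approaches $\cosh(2aC)$, which in general exceeds $\cosh(a)$ (the tubular constant behaves like $\tfrac1a\log(1+\sqrt2)$ and certainly need not be $<1/2$), so the argument does not close. What is actually needed is not the mere convexity of $\cosh(af)$ but the full CAT$(-a^2)$ law-of-cosines comparison applied to the isoceles triangle $(x,\gamma(s^*),\gamma_{x,\eta}(s^*))$. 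Comparing with the triangle in $\mathbb{H}^2(-a^2)$ with the same side lengths gives
\[
\cosh\bigl(af(t)\bigr)\ \le\ 1+\frac{\sinh^2(at)}{\sinh^2(as^*)}\bigl(\cosh\bigl(af(s^*)\bigr)-1\bigr),\qquad 0\le t\le s^*,
\]
and since $t\le s^*-(K-C)$ one has $\sinh(at)/\sinh(as^*)\le e^{-a(K-C)}$, so the interpolation factor is $\le e^{-2a(K-C)}$ rather than $t/s^*$. That factor is uniformly small in $t$ once $K$ is chosen large depending only on $a$ and $C$, and it does force $f(t)<1$. With this replacement your argument goes through; as written it does not.
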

\begin{proof}
We will prove this lemma by contradiction. Assume this Lemma fails, then there exists $x\in X, ~\xi\in X(\infty)$, and sequences $\left\{t_{i}\right\}_{i=1}^{+\infty}\subseteq\R, \ \left\{s_{i,j}\right\}_{i,j=1}^{+\infty}$ with $\lim\limits_{i\rightarrow+\infty}t_i=+\infty,\ \lim\limits_{j\rightarrow+\infty}s_{i,j}=+\infty$ such that for any $i$, $j$
\begin{equation}\label{0}
A_{\g(t_i+s_{i,j}),\frac{\pi}{2}}(\xi)\not\subseteq B_{x,t_i}(\xi).
\end{equation}

We will need the following facts.
\\\textbf{Facts}. Let $\b$ be a geodesic in $X$. $\forall t\in \R$, let $D_t$ be the hyper-surface that orthogonal to $\b$ at $\b(t)$. We denote the points at infinity of this hyper-surface by $D_t(\infty)$, then we have
\begin{equation}\label{eq:squareum1}
D_{t_1}(\infty)\cap D_{t_2}(\infty)=\emptyset,\ \ D_{t_1}\cap D_{t_2}=\emptyset,\ \ \forall t_1\neq t_2,
\end{equation}
\begin{equation}\label{2}
\bigcup\limits_{t\in \R}D_t(\infty)= X(\infty)-\{\b(-\infty),\b(+\infty)\},
\end{equation}
\begin{equation}\label{3}
A_{\b(t),\frac{\pi}{2}}(\xi)-\{\xi\}=\bigcup\limits_{s\geq t}D_s(\infty).
\end{equation}

We will only prove (\ref{eq:squareum1}) here. (\ref{2}) and (\ref{3}) are straightforward corollaries of (\ref{eq:squareum1}).

Suppose $D_{t_1}(\infty)\cap D_{t_2}(\infty)\neq\emptyset$, take $\eta\in D_{t_1}(\infty)\cap D_{t_2}(\infty)$,
then by the definitions of $D_t$ and $D_t(\infty)$, the sum of interior angles of the geodesic triangle
$\bigtriangleup_{\b(t_1)\b(t_2)\eta}$ is greater than $\frac{\pi}{2}+\frac{\pi}{2}=\pi$,
which is not possible because $X$ is a negatively curved manifold with the curvature $K\leq-a^2<0$. Thus $D_{t_1}(\infty)\cap D_{t_2}(\infty)=\emptyset$.
Similarly we can show that $D_{t_1}\cap D_{t_2}=\emptyset$.

Now return to the proof of the lemma. If (\ref{0}) holds, we can fix $i\in \Z$ and take
\[
\xi_{i,j}\in A_{\b(t_i+s_{i,j}),\frac{\pi}{2}}(\xi)-B_{x,t_i}(\xi),
\]

By (\ref{3}), there exists $t_{i,j}>t_i +s_{i,j}$, such that
\[
\xi_{i,j}\in D_{t_{i,j}}(\infty).
\]
We can choose $\{t_{i,j}\}_{j=1}^{+\infty}$ to be an increasing sequence with respect to the index $j$. (\ref{eq:squareum1}), (\ref{2}) and (\ref{3}) imply that $\lim\limits_{j\rightarrow+\infty}t_{i,j}=+\infty$. Thus for the fixed $i\in \Z$, we obtain a sequence of points at infinity
\begin{equation}\label{4}
\{\xi_{i,j}\}_{j=1}^{+\infty}\subseteq X(\infty)-B_{x,t_i}(\xi).
\end{equation}
By the compactness of $X(\infty)$, we can assume that
\[
\overline{\xi}_{i}=\lim\limits_{j\rightarrow+\infty}\xi_{i,j}.
\]
The fact $\xi \in B_{x,t_i}(\xi)$ and (\ref{4}) imply that $\overline{\xi}_{i}\neq\xi$, then (\ref{3}) shows that there exists $t>t_i$, such that
\begin{equation}\label{5}
\overline{\xi}_{i}\in D_t(\infty).
\end{equation}
By (\ref{3}), for a fixed $i\in \Z$, the sets $\{A_{\b(t_i+s_{i,j}),\frac{\pi}{2}}\}$ are nested, thus (\ref{5}) is not possible. Therefore (\ref{0}) is not true. We complete the proof of this lemma.
\end{proof}

Finally we will prove Theorem~$\ref{them2}$.
\begin{proof}
By Birkhorff ergodic theorem, for $\la$-a.e.~$\ v\in T^1X$, the limit
\[
\lambda_F=\lim\limits_{t\rightarrow+\infty}\frac{1}{t}\int_0^t\widetilde{F}(\g_v'(s))ds
\]
exists and is independent of $v$. For any $v\in T^1X$, let $y=\g_v(t),\ t>0$.
Given $X/\Gamma$ is compact and the Gibbs cocycle $C_{F-\d_{\Gamma,F,\xi}}(x,y)$ is continuous and $\Gamma$-invariant(see \cite{PPS}),
we know there exists a constant $D$ depending only on the curvature bounds and the function $F$, such that
\[
\big|C_{F-\d_{\Gamma,F},\g_v(+\infty)}(x,y)-C_{F-\d_{\Gamma,F},\xi}(x,y)\big|\leq D,\ \forall\xi\in B_{x,t}(\g_v(+\infty)).
\]
By the definition,
\[
C_{F-\d_{\Gamma,F},\g_v(+\infty)}(x,y)=-\int_0^t\widetilde{F}(\g_v'(s))ds+t\cdot\d_{\Gamma,F}.
\]
thus
\begin{eqnarray*}
	\int\limits_{B_{x,t}(\g_v(+\infty))}\mathrm{e}^{-D}d\mu^{F}_y(\xi) &\leq& \mathrm{e}^{t\cdot\d_{\Gamma,F}-\int_0^t\widetilde{F}(\g_v'(s))ds}\int\limits_{B_{x,t}(\g_v(+\infty))}\mathrm{e}^{-C_{F-\d_{\Gamma,F},\xi}(x,y)}d\mu^{F}_y(\xi)\\
	&\leq& \int\limits_{B_{x,t}(\g_v(+\infty))}\mathrm{e}^Dd\mu^{F}_y(\xi),
\end{eqnarray*}
i.e.,
\begin{eqnarray*}
	\mathrm{e}^{-D}\cdot\mu^{F}_y(B_{x,t}(\g_v(+\infty))) &\leq& \mathrm{e}^{t\cdot\d_{\Gamma,F}-\int_0^t\widetilde{F}(\g_v'(s))ds}\cdot\mu^{F}_x(B_{x,t}(\g_v(+\infty)))\\ &\leq& \mathrm{e}^D\cdot\mu^{F}_y(B_{x,t}(\g_v(+\infty)))
\end{eqnarray*}
Therefore
\begin{equation}\label{6}
\Big|t\cdot\d_{\Gamma,F}-\int_0^t\widetilde{F}(\g_v'(s))ds+\ln{\mu^{F}_x(B_{x,t}(\g_v(+\infty)))}-\ln{\mu^{F}_y(B_{x,t}(\g_v(+\infty)))}\Big|\leq D
\end{equation}
By Lemma $\ref{l2}$, we know there exist positive constants $N$ and $K$ depending only on the curvature bounds such that
\begin{equation}\label{7}
A_{\g_v(t+K),\frac{\pi}{2}}(\g_v(+\infty))\subseteq B_{x,t}(\g_v(+\infty)),\ \ \forall t\geq N.
\end{equation}
Lemma~$\ref{lem1}$ implies that there exists a constant $C$ which is independent of $\g_v(t+K)$, such that
\begin{equation}\label{8}
\mu^{F}_{\g_v(t+K)}(A_{\g_v(t+K),\frac{\pi}{2}}(\g_v(+\infty)))>C.
\end{equation}
(\ref{6}), (\ref{7}) and  (\ref{8}) imply that
\[
\lim\limits_{t\rightarrow+\infty}\frac{1}{t}\ln\mu^{F}_x(B_{x,t}(\g_v(+\infty)))
=-\d_{\Gamma,F}+\la_F.
\]

This completes the proof of Theorem $\ref{them2}$.
\end{proof}

\section*{\textbf{Acknowledgements}}
F. Liu is partially supported by NSFC under Grant Nos. 11571207 and 11871045.
F.~Wang is partially supported by NSFC under Grant No.11871045 and by the State Scholarship Fund from China Scholarship Council (CSC).

\end{document}